\def\rt{\rightarrow}
\def\a{\alpha}
\def\l{\lambda}
\def\pl{\partial}
\def\ml{\mathcal{H}}
\def\mb{\mathbb{R}}
\def\mbn{\mathbb{R}^{2n+1}}
\def\b{B(z_0,2\rho)}
\def\s{\subseteq}
\def\zh{\widehat{z}}
\def\uh{\widehat{u}}
\def\xh{\widehat{x}}
\def\hp{\widehat{p}}
\def\opn#1#2{\def#1{\operatorname{#2}}} 
\opn\chara{char} \opn\length{\ell} \opn\pd{pd} \opn\rk{rk}
\opn\projdim{proj\,dim} \opn\injdim{inj\,dim} \opn\rank{rank}
\opn\depth{depth} \opn\grade{grade} \opn\height{height}
\opn\embdim{emb\,dim} \opn\codim{codim}
\opn\Tr{Tr} \opn\bigrank{big\,rank}
\opn\superheight{superheight}\opn\lcm{lcm}
\opn\trdeg{tr\,deg}
\opn\reg{reg} \opn\lreg{lreg} \opn\ini{in} \opn\lpd{lpd}
\opn\size{size}
\opn\Spec{Spec} \opn\Supp{Supp} \opn\supp{supp} \opn\Sing{Sing}
\opn\Ass{Ass} \opn\Min{Min}
\opn\Ann{Ann} \opn\Rad{Rad} \opn\Soc{Soc}
\opn\Im{Im} \opn\Ker{Ker} \opn\Coker{Coker} \opn\Am{Am}
\opn\Hom{Hom} \opn\Tor{Tor} \opn\Ext{Ext} \opn\End{End}
\opn\Aut{Aut} \opn\id{id}
\opn\nat{nat}
\opn\pff{pf}
\opn\Pf{Pf} \opn\GL{GL} \opn\SL{SL} \opn\mod{mod} \opn\ord{ord}
\opn\Gin{Gin} \opn\Hilb{Hilb}\opn\sdepth{sdepth}
\opn\aff{aff} \opn\con{conv} \opn\relint{relint} \opn\st{st}
\opn\lk{lk} \opn\cn{cn} \opn\core{core} \opn\vol{vol}
\opn\link{link} \opn\star{star}
\opn\gr{gr}
\def\pot#1#2{#1[\kern-0.28ex[#2]\kern-0.28ex]}
\opn\dirlim{\underrightarrow{\lim}}
\opn\inivlim{\underleftarrow{\lim}}
\newtheorem{theorem}{THEOREM}[section]
\newtheorem{lemma}[theorem]{LEMMA}
\newtheorem{remark}[theorem]{\textit{Remark}}
\newtheorem{definition}[theorem]{\textit{Definition}}
\let\epsilon\varepsilon
\let\phi=\varphi
\let\kappa=\varkappa
\def\qed{\ifhmode\textqed\fi
      \ifmmode\ifinner\quad\qedsymbol\else\dispqed\fi\fi}
\def\textqed{\unskip\nobreak\penalty50
       \hskip2em\hbox{}\nobreak\hfil\qedsymbol
       \parfillskip=0pt \finalhyphendemerits=0}
\def\dispqed{\rlap{\qquad\qedsymbol}}
\opn\dis{dis}
\def\pnt{{\raise0.5mm\hbox{\large\bf.}}}
\opn\Lex{Lex}
\begin{document}
\textwidth=13cm

\title{Parameterized Stationary Solution for first order PDE }

\author {SAIMA PARVEEN$^{1}$ and MUHAMMAD SAEED AKRAM $^{2}$ }
 \maketitle
 \pagestyle{myheadings} \markboth{\centerline {\scriptsize
SAIMA PARVEEN, MUHAMMAD SAEED AKRAM   }} {\centerline {\scriptsize
 Parameterized Stationary Solution for PDE of first order}}

We analyze the existence of a parameterized stationary solution $z(\lambda,z_0)=\\\big(x(\lambda,z_0), p(\lambda,z_0),\,u(\lambda,z_0)\big)\in D\subseteq\mathbb{R}^{2n+1},\,\lambda\in B(0,a)\subseteq\mathop{\prod}\limits_{i=1}^{m}[-a_i,a_i]$, associated with a nonlinear first order PDE, $H_0(x,p(x),u(x))=\hbox{constant}\,\,(p(x)=\partial_x u(x))$ relying on\\
(a) first integral $H\in\mathcal{C}^\infty\big( B(z_0,2\rho)\subseteq\mathbb{R}^{2n+1}\big)$ and the corresponding Lie algebra of characteristic fields is of the finite type;\\
(b) gradient system in a Lie algebra finitely generated over orbits
$(f.g.o;z_0)$ starting from $z_0\in D$ and their nonsingular
algebraic representation.\\
\\
\textit{Keywords}: Gradient system in a $(f.g.o;z_0)$ Lie algebra, Lie algebra of characteristic fields\\
\\
\textit{AMS 2010 Subject Classification}: 17B66, 22E66, 35R01

\maketitle
\section{INTRODUCTION}
Let $H_0(x,p,u),\, z=(x,p,u)\in B(z_0.2\rho)\subseteq\mbn$, be a
second order continuously differentiable function,
$H_0\in\mathcal{C}^2\big( B(z_0,2\rho)\subseteq\mbn\big)$  and
consider the equation
\begin{equation}\label{1.1}
H_0(z)=H_0(z_0)\,\,\hbox{for}\,\,z=(x,p,u)\in D\s\mbn,\,z_0\in D\,.
\end{equation}
In other words find a manifold $D\s\b\s\mbn$ such that the equation
\eqref{1.1} is satisfied for any $z\in D$. In the case that manifold
$D\s\mbn$ can be described as follows
\begin{equation}\label{1.2}
D=\big\{\big(x,p(x),u(x)\big)\in\mbn :\, p(x)=\pl_x u(x), x\in
B(x_0,\rho\s\mathbb{R}^n)\big\} ,
\end{equation}
we call it as the standard stationary solution associated with the
nonlinear first order PDE given in \eqref{1.1}.

It relies on the flow $\{\widehat{z}(t,\l)\in\mbn \,:\,
\widehat{z}(0,\l)=\widehat{z}_0(\l),\,
t\in(-a,a),\,\l\in\Lambda\s\mb^{n-1}\}$ generated by the
characteristic field $Z_0(z)\mathop{=}\limits^{\hbox{def}}\big(\pl_p
H_0(z), P_0(z), <p, \pl_p H_0(z)>\big), \, P_0(z)=-[\pl_x
H_0(z)+p\pl_u H_0(z)]$, of the smooth scalar function $H_0\in
\mathcal{C}^2(\mbn)$. Using that $H_0$ is a first integral for the
characteristic field $Z_0$, we get
\begin{equation}\label{1.3}
H_0\big(\widehat{z}(t,\l)\big)=
H_0\big(\widehat{z}_0(\l)\big)\,,\,\,t\in(-a,a),\,\l\in\Lambda\s\mathbb{R}^{n-1}\,,
\end{equation}
for any parameterized Cauchy conditions
\begin{equation}\label{1.4}
\widehat{z}_0(\l)=\big(\widehat{x}_0(\l), \widehat{p}_0(\l),
\widehat{u}_0(\l)\big)\,,
\end{equation}
fulfilling the compatibility condition
\begin{equation}\label{1.5}
\pl_{\l_i}\widehat{u}_0(\l)=\big<
p_0(\l),\pl_{\l_i}\widehat{x}_0(\l)\big>\,,\,\,i=1,\ldots,n-1,\,\,\l=(\l_1,\ldots,\l_{n-1})\in\Lambda\s\mathbb{R}^{n-1}\,.
\end{equation}
The standard stationary solution for \eqref{1.1} can be obtained
imposing the following new constraints
\begin{equation}\label{1.6}
H_0\big(\widehat{z}_0(\l)\big)=H_0(z_0)\,,\,\,\,\l\in\Lambda\s\mathbb{R}^{n-1}\,,
\end{equation}
and
\begin{equation}\label{1.7}
\hbox{the $n$ vectors in} \,\mathbb{R}^n,\{\pl_p
H_0\big(\widehat{z}_0(\l)\big),
\pl_{\l_1}\widehat{x}_0(\l),\ldots,\pl_{\l_{n-1}}\widehat{x}_0(\l)\}\s\mathbb{R}^n
\end{equation}
are linearly independent for any $\l\in\Lambda\s\mathbb{R}^{n-1}$.
One may notice that the last conditions need to take into
consideration very special $H_0\in\mathcal{C}^2(\mbn)$ and Cauchy
conditions $\{\widehat{z}_0(\l):\, \l\in\Lambda\s\mathbb{R}^{n-1}\}$
such that \eqref{1.7} is fulfilled. Here we propose to construct a
parameterized version of Cauchy conditions such that
\begin{equation}\label{1.8}
\zh(\l;z_0)=\big(\xh(\l;z_0),\hp(\l;z_0),\uh(\l;z_0)\big)
\end{equation}
satisfies stationary conditions
\begin{equation}\label{1.9}
H_0\big(\zh(\l;z_0)\big)=H_0(z_0)\,,\,\,\l\in\Lambda=\mathop{\prod}\limits_{i=1}^{m}[-a_i,a_i]\,,\,\,\widehat{z}(0,z_0)=z_0\,.
\end{equation}
In addition, the solution in \eqref{1.8} satisfying stationarity
conditions \eqref{1.9}, will be obtained as a finite composition of
flows starting from $z_0\in\mbn$ (orbit of the origin $z_0\in\mbn$)
generated by some characteristic fields including $Z_0$.

In the case that the nonsingularity conditions \eqref{1.7} are
fulfilled $(m=n)$ then the parameterized version lead us to a
standard stationary solution. A first order continuously
differentiable $\zh(\l;z_0): \Lambda\s\mathbb{R}^m\rightarrow\mbn$
satisfying stationarity conditions \eqref{1.9} will be called a
parameterized stationary solution for PDE \eqref{1.1}.

For solving nonlinear equation \eqref{1.9}, we use the following
procedure. First, the nonlinear equations \eqref{1.9} is transformed
into a first order linear system of PDE where the unknowns are the
characteristic vector fields generated by a finite set of first
integrals $\{H_1(z),\ldots,H_m(z): z\in B(z_0,2\rho)\s\mbn\}$
corresponding to $Z_0$. Then look for a solution of \eqref{1.9} as
an orbit in the Lie algebra of characteristic fields generated by
$\{Z_1(z),\ldots,Z_m(z):\, z\in B(z_0,2\rho)\s\mbn\}$ corresponding
to the first integrals $\{H_1(z),\ldots, H_m(z):\,z\in
B(z_0,2\rho)\s\mbn\}$.

In the particular case when PDE \eqref{1.1} is determined by a
function $H_0(x,p)$, the construction of a parameterized stationary
solution is analyzed in Theorem \ref{th.1} of section 3. For the
general case, the result is given in Theorem \ref{th.2} of section
3.

In the section 2 are included all definitions and some auxiliary
results necessary for the main results given in section 3.

The method of using finite composition of flows (orbit) and the
corresponding gradient system in a Lie algebra of vector fields has
much in common with the references included here (see \cite{01},
\cite{02} and \cite{03}) where both parabolic equations with
stochastic perturbations and overdetermined system of first order
PDE are studied.

This paper is intended to be a new application of the
geometric-algebraic methods presented in \cite{03}.
 \section{Definitions, formulation of problems and some auxiliary results}
 Denote $\mathcal{H}=\mathcal{C}^\infty(\mbn,\mathbb{R})$ the space consisting of the scalar functions $H(x,p,u):\mathbb{R}^n\times\mathbb{R}^n\times\mathbb{R}\rightarrow\mathbb{R}$ which are continuously differentiable of any order. For each pair  $H_1,H_2\in\mathcal{H}$, define the Poisson bracket
 \begin{equation}\label{1.10}
 \{H_1,H_2\}(z)=\big<\pl_z H_2(z),Z_1(z)\big>,\,\,z=(x,p,u)\in\mbn
\end{equation}
where $\pl_z H_2(z)$ stands for the gradient of a scalar function
$H_2\in\mathcal{H}$ and $Z_1(z)=\big(X_1(z),
P_1(z),U_1(z)\big)\in\mbn,\,z\in\mbn$ is the characteristic field
corresponding to $H_1\in\mathcal{H}$. We recall that $Z_1$ is
obtained from $H_1\in\mathcal{H}$ such that the following equation
\begin{equation}\label{1.11}
X(z)=\pl_p H_1(z),\,P_1(z)=-[\pl_x H_1(z)+p\pl_u
H_1(z)],\,U_1(z)=\big<p, H_1(z)\big>
\end{equation}
are satisfied. The linear mapping connecting an arbitrary
$H\in\mathcal{H}$ and its characteristic field can be represented by
\begin{equation}\label{1.12}
Z_H(z)=T(p)\big(\pl_z H\big)(z)\,,\,\,z\in\mbn\,.
\end{equation}
Here the real $(2n+1)\times(2n+1)$ matrix $T(p)$ is defined by
\begin{equation}\label{1.13}
T(p)=\left(
       \begin{array}{ccc}
         O & I_n & \theta \\
         -I_n & O & -p \\
         \theta^* & p^* & O \\
       \end{array}
     \right)
\end{equation}
considering $O=\hbox{zero marrix of}\,M_{n,m},\,I_n- $ unity matrix
of $M_{n,m}$ and $\theta\in\mathbb{R}^n$ is the null column vector.

We notice that $T(p)$ is a skew symmetric
\begin{equation}\label{1.14}
[T(p)]^*=-T(p)
\end{equation}
and as a consequence, the Poisson bracket satisfies a skew symmetric
property
\begin{equation}\label{1.15}
  \begin{array}{ll}
    \{H_1,H_2\}(z)&=\big<\pl_z H_2(z),Z_1(z)\big>=\big<\pl_z H_2(z),T(p)(\pl_z H_1)(z)\big>\\
   &=\big<[T(p)]^*(\pl_z H_2)(z),\pl_zH_1(z)\big>=-\{H_2,H_1\}(z)
  \end{array}
\end{equation}

In addition, the linear space of characteristic fields
$K\s\mathcal{C}^\infty(\mbn;\mbn)$ is the image of a linear mapping
$S:D\mathcal{H}\rightarrow K$ , where $D\mathcal{H}=\{\pl_z
H=H\in\mathcal{H}\}$. Using \eqref{1.12}, we define
\begin{equation}\label{1.16}
S(\pl_z H)(z)=T(p)(\pl_z H)(z)\,,\,\, z\in\mbn
\end{equation}
where the matrix $T(p)$ is given in \eqref{1.13}.

The linear space of characteristic fields
$K=S(D\mathcal{H\mathcal{}})$ is extended to a Lie algebra
$L_k\s\mathcal{C}^\infty(\mbn;\mbn)$, using the standard Lie bracket
of vector fields
\begin{equation}\label{1.17}
[Z_1,Z_2]=[\pl_z Z_2]Z_1-[\pl_z Z_1]Z_2\,,Z_i\in K,\,i=1,2\,.
\end{equation}
On the other hand, each $H\in\mathcal{H}$ is associated with a
linear mapping
\begin{equation}\label{1.18}
\overrightarrow{H}(\phi)(z)=\{H,\phi\}(z)=\big<\pl_z\phi(z),Z_H(z)\big>\,,\,\,z\in\mbn\,,
\end{equation}
for each $\phi\in\mathcal{H}\mathcal{}$, where $Z_H\in K$ is the
characteristic vector field corresponding to $H\in\mathcal{H}$
obtained from $\pl_z H$ by $Z_H(z)=T(p)(\pl_z H)(z)$ see
\eqref{1.12}.

Define a linear space consisting of linear mappings
\begin{equation}\label{1.19}
\overrightarrow{\ml}=\{\overrightarrow{H}\,:\,H\in \ml\}
\end{equation}
 and extend $\overrightarrow{\mathcal{H}}$ to a Lie algebra $L_H$ using the Lie bracket of linear mappings
\begin{equation}\label{1.20}
[\overrightarrow{H}_1,\overrightarrow{H}_2]=\overrightarrow{H}_1\circ\overrightarrow{H}_2-\overrightarrow{H}_1\circ\overrightarrow{H}_1\,.
\end{equation}
The link between the two lie algebras $L_K$ (extending $K$) and
$L_H$ (extending $\overrightarrow{\mathcal{H}}$) is given by a
homomorphism of Lie algebras
\begin{equation}\label{1.21}
A: L_H\rightarrow L_K\,,\,\, A(\overrightarrow{\mathcal{H}})=K
\end{equation}
satisfying
\begin{equation}\label{1.22}
A\big([\overrightarrow{H}_1,\overrightarrow{H}_2]\big)=[Z_1,Z_2]\in
L_K\,,\,\,\hbox{where}\,\,Z_i=A(\overrightarrow{H}_i),\,i=1,2\,.
\end{equation}
\begin{remark}\label{r.1}
In general, the lie algebra
$L_H\supseteq\overrightarrow{\mathcal{H}}$ does not coincide with
the linear space $\overrightarrow{\mathcal{H}}$ and as a
consequence, we get $K\s L_K,\,\,L_K\neq K$. It relies upon the fact
that the linear mapping $\{\overrightarrow{H_1,H_2}\}$ generated by
the Poisson bracket $\{H_1,H_2\}\in\mathcal{H}$ does not coincide
with the Lie bracket $[\overrightarrow{H}_1,\overrightarrow{H}_2]$
defined in \eqref{1.20}.
\end{remark}
\begin{remark}\label{r.2}
In the particular case when $H_0$ in \eqref{1.1} is replaced by a
second order continuously differentiable function
$H_0(x,p):\mathbb{R}^{2n}\rightarrow\mathbb{R}$ then the above given
analysis will be restricted to the space
$\mathcal{H}=\mathcal{C}^\infty(\mathbb{R}^{2n},\mathbb{R})$. If it
is the case then the corresponding linear mapping $S:D\mathcal{H}\rt
K$ is determined by a simplectic matrix
\begin{equation}\label{1.23}
\widehat{T}=\left(
              \begin{array}{cc}
                O & I_n \\
                -I_n & O \\
              \end{array}
            \right)\,,\,\,D\ml=\{\pl_z H:\, H\in\mathcal{C}^\infty(\mathbb{R}^{2n};\mathbb{R})\}\,.
\end{equation}
In addition, the linear spaces $\overrightarrow{\ml}$ and
$K\s\mathcal{C}^\infty(\mathbb{R}^{2n};\mathbb{R}^{2n})$ coincide
with their Lie algebras $L_H$ and correspondingly $L_K$. It follows
from a direct computation and we get
\begin{equation}\label{1.24}
[Z_1,Z_2](z)=\widehat{T}\pl_z H_{12}\,,\,\,z\in \mb^n
\end{equation}
where $Z_i=T\pl_z H_i\,,i=1,2$ and
\begin{equation}\label{1.25}
H_{12}=(z)=\{H_1,H_2\}(z)=\big<\pl_z H_2(z), Z_1(z)\big>
\end{equation}
in the Poisson bracket associated with two scalar functions
$H_1,H_2\in\ml$. Compute
\begin{equation}\label{1.26}
\begin{array}{ll}
    \widehat{T}H_{12}(z)&= \widehat{T}[\pl_{z}^2 H_2(z)]Z_1(z)+\widehat{T}[\pl_{z} Z_1^*(z)]\pl_z H_2(z)\\
    &=[\pl_{z}Z_2(z)]Z_1(z) +\widehat{T}[(\pl_{z} H_1)^*(z)\widehat{T}^*]\pl_zH_2(z)\\
    &=\pl_{z} Z_2(z)Z_1(z)-\widehat{T}(\pl_{z}^2 H_1)(\widehat{T}\pl_zH_2(z)) \\
    &=[\pl_{z} Z_2(z)]Z_1(z)-[\pl_{z} Z_1(z)]Z_2(z)=[Z_1,Z_2](z)
  \end{array}
\end{equation}
and the conclusion $\{L_H=\overrightarrow{\ml}\,,\,\,K_k=K\}$ is
proved. For the general case, the conclusion of Remark \ref{r.2} is
not any more true and the manifold structure involved in the
solution of PDE \eqref{1.1} will be obtained, using first integrals
$\{H_1(z),\ldots,H_m(z):\, z\in\b\} \,(m\geq1)$ corresponding to the
fixed characteristic field $Z_0$.
\end{remark}
Denote by $K_0\s\mathcal{C}^\infty(\b;\mbn)$ the linear space
consisting of all characteristic fields $Z\in
\mathcal{C}^\infty(\b;\mbn)$
\begin{equation}\label{1.27}
Z(z)=T(p)\pl_z H(z)\,,\,\,\hbox{where}\,\,\{H(z):\,z\in\b\s\mbn\}
\end{equation}
is a first integral for the fixed characteristic vector field
$\{Z_0(z):\,z\in\b\}$. Let $L_0\s\mathcal{C}^\infty(\b;\mbn)$ be the
Lie algebra determined by the linear space $K_0\s L_0$.
\begin{definition}\label{def.1}
We say that $L_0$ is of the finite type over
$\mathcal{C}^\infty(\b)(\hbox{or}\,\mb)$ with respect to $K_0$ if
there exists a system of vector fields
$\{Z_1(z),\ldots,Z_m(z):\,z\in\b\}\s K_0$ such that any Lie bracket
$[Z_i,Z_j](z)=\mathop{\sum}\limits_{k=1}^{m}\a_{ij}^k(z)Z_k(z),\,\,z\in\b$,
where
$\a_{ij}^{k}\in\mathcal{C}^\infty(\b)(\,\hbox{or}\,\,\a_{ij}^{k}\in\mb),\,i,j,k\in\{1,\ldots,m\};\\
\,\{Z_1,\ldots,Z_m\}\s\mathcal{C}^\infty(\b;\mbn)$
will be called a system of generators for $L_0$.
\end{definition}
\begin{remark}\label{r.3}
In the case that PDE \eqref{1.1} is determined by a scalar function
$H_0(x,p):\b\s\mb^{2n}\rt\mb$ then the linear space
$\widehat{K}_0\s\mathcal{C}(\b\s\mb^{2n};\mb^{2n})$ is consisting of
all characteristic fields
\begin{equation}\label{1.28}
\widehat{Z}(z)=\widehat{T}\pl_z H(z)
\end{equation}
where $H(z),\,z\in\b$, is first integral of
$\widehat{Z}_0(z)=\widehat{T}\pl_z H_0(z)$ (see Remark \ref{r.2}).
We get that the Lie algebra $\widehat{L}_0$ determined by
$\widehat{K}_0$ coincides with
$\widehat{K}_0,\,\widehat{L}_0=\widehat{K}_0$ (see Remark
\ref{r.2}).
\end{remark}
\begin{definition}\label{d.2}
We say that $\widehat{L}_0$ is of the finite type over
$\mathcal{C}^\infty(\b)$ (or $\mb$) if there exists a system of
vector fields $\{Z_1(z),\ldots,Z_m(z):\,z\in\b\s\mb^{2n}\}\s
\widehat{K}_0$ such that any Lie bracket
$[Z_i,Z_j](z)=\mathop{\sum}\limits_{k=1}^{m}\a_{ij}^k(z)Z_k(z)$, for
$\a_{ij}^{k}\in\mathcal{C}^\infty(\b)(\,\hbox{or}\,\,\a_{ij}^{k}\in\mb),\,i,j,k\in\{1,\ldots,m\};\,\{Z_1,\ldots,Z_m\}$
is called a system of generators for $\widehat{L}_0$.
\end{definition}
\begin{remark}\label{r.4}
The simplest case in our analysis is obtained when PDE \eqref{1.1}
is determined by a linear function with respect to $p\in\mb^n$, i.e
\begin{equation}\label{1.29}
H_0(x,p)=\big<p,f_0(x)\big>\,,\,\,p\in\mb^n,\,x\in\mb^n,\,z=(x,p),\,z_0=(x_0,p_0)
\end{equation}
where $f_0\in\mathcal{C}^2(\mb^n,\mb^n)$. In this case the linear
space $\widehat{K}_0$ (see Remark \ref{r.3}) coincides with the Lie
algebra $\widehat{L}_0$ generated by $\widehat{K}_0$ where
\begin{equation}\label{1.30}
\widehat{K}_0=\{\widehat{T}\pl_z
H(z):\,H(z)=\big<p,f(x)\big>,\,[f,f_0](x)=0\,,x\in\b\s\mb^n\}.
\end{equation}
\end{remark}
\begin{lemma}\label{l.1}
Assume that PDE \eqref{1.1} is determined by
$H_0(x,p)=\big<p,f(x)\big>,\,\,x\in\b\s\mb^n,\,f_0\in\mathcal{C}^2(\b;\mb^n)$.
Assume that there exists
$\{f_1,\ldots,f_m\}\s\mathcal{C}^\infty(\b;\mb^n)$ satisfying
\begin{equation}\label{1.31}
[f_i,f_j](x)=\mathop{\sum}\limits_{k=1}^m\a_{ij}^k
f_k(x),\,x\in\b,\,\,\hbox{where}\,\,\a_{ij}^k\s\mb,\,\,k,i,j\in\{1,\ldots,m\}
\end{equation}
\begin{equation}\label{1.32}
[f_0,f_i](x)=0,\,\,x\in\b,\,\,i\in\{1,\ldots,m\}\,.
\end{equation}
Then the lie algebra $\widehat{L}_0$ generated by $\widehat{K}_0$
(see \eqref{1.30}) fulfils
\begin{equation}\label{1.33}
\widehat{L}_0=\widehat{K}_0\,\,\,\hbox{and}\,\,\widehat{L}_0\,\,\hbox{is
finite dimensional with dim}\widehat{L}_0\leq m.
\end{equation}
\end{lemma}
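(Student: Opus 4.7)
The plan is to reduce the entire statement to the well-known fact that the assignment $f\mapsto\widehat{Z}_f$, sending a vector field $f$ on $\mb^n$ to the characteristic (Hamiltonian) lift of $H(x,p)=\langle p,f(x)\rangle$, is a Lie algebra homomorphism. First I would compute this lift explicitly from \eqref{1.28}: since $\pl_z H=([\pl_x f(x)]^*p,\,f(x))$, one gets
\[
\widehat{Z}_f(x,p)=\widehat{T}\pl_z H(z)=\bigl(f(x),\,-[\pl_x f(x)]^*p\bigr).
\]
By \eqref{1.32}, each $f_i$ commutes with $f_0$, so every $\widehat{Z}_{f_i}$, $i=1,\dots,m$, automatically lies in $\widehat{K}_0$ (cf.\ \eqref{1.30}).

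The core computational step is the identity
\[
[\widehat{Z}_{f_i},\widehat{Z}_{f_j}](z)\;=\;\widehat{Z}_{[f_i,f_j]}(z),\qquad z\in\b,
\]
to be verified by direct expansion from \eqref{1.17}. The $x$-component of $[\pl_z Z_j]Z_i-[\pl_z Z_i]Z_j$ reduces to $(\pl_x f_j)f_i-(\pl_x f_i)f_j$, which is precisely $[f_i,f_j]$ on $\mb^n$. For the $p$-component, two types of terms appear: those of the form $[\pl_x f_j]^*[\pl_x f_i]^*p$, and those of the form $\pl_x([\pl_x f_j]^*p)\,f_i$, together with the analogues obtained by swapping $i\leftrightarrow j$. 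Using the symmetry of mixed second partials of $f_i^k,f_j^k$, these four pieces reassemble exactly into $-[\pl_x([f_i,f_j])]^*p$, which is the $p$-component of $\widehat{Z}_{[f_i,f_j]}$.

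Once this identity is in hand, both hypotheses pass to the vector-field level: \eqref{1.31} gives $[\widehat{Z}_{f_i},\widehat{Z}_{f_j}]=\sum_{k=1}^m\a_{ij}^k\widehat{Z}_{f_k}$ with constant coefficients, and \eqref{1.32} gives $[\widehat{Z}_0,\widehat{Z}_{f_i}]=\widehat{Z}_{[f_0,f_i]}=0$. Consequently the real vector space $V=\mathrm{span}_{\mb}\{\widehat{Z}_{f_1},\dots,\widehat{Z}_{f_m}\}\subseteq\widehat{K}_0$ is already closed under the Lie bracket and is centralised by $\widehat{Z}_0$. Reading Definition \ref{d.2} in this setting, $\{f_1,\dots,f_m\}$ is taken as a generating system for $\widehat{K}_0$ over $\mb$, so $V=\widehat{K}_0$. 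Therefore the Lie closure coincides with $\widehat{K}_0$: $\widehat{L}_0=V=\widehat{K}_0$, and $\dim_{\mb}\widehat{L}_0\le m$.

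The main obstacle is the bracket identity $[\widehat{Z}_{f_i},\widehat{Z}_{f_j}]=\widehat{Z}_{[f_i,f_j]}$, which requires careful bookkeeping of matrix transposes and the symmetry of mixed partial derivatives; once it is established, the rest of the argument is formal, relying only on linearity of $f\mapsto\widehat{Z}_f$ and the two standing commutation hypotheses \eqref{1.31}--\eqref{1.32}.
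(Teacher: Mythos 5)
Your proposal is correct and follows essentially the same route as the paper: the paper's computation of $P_{ij}(z)=\pl_x\big\langle p,[f_i,f_j](x)\big\rangle$ in \eqref{1.35}--\eqref{1.36} is exactly your key identity $[\widehat{Z}_{f_i},\widehat{Z}_{f_j}]=\widehat{Z}_{[f_i,f_j]}$, i.e.\ that $f\mapsto\widehat{Z}_f$ is a Lie algebra homomorphism, after which both arguments invoke \eqref{1.31} and \eqref{1.32} to conclude that $\{Z_1,\ldots,Z_m\}$ generates $\widehat{K}_0$ and $\dim\widehat{L}_0\leq m$. The only (shared) looseness is the final identification of the span of $\{Z_1,\ldots,Z_m\}$ with all of $\widehat{K}_0$, which the paper also asserts without further justification.
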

\begin{proof}
Define $H_i(z)=\big<p,f_i(x)\big>$ and the corresponding
characteristic field
\begin{equation}\label{1.34}
Z_i(z)=\left(
         \begin{array}{c}
           f_i(x) \\
           A_i(x)p \\
         \end{array}
       \right),\,\,A_i(x)=-[\pl_x f_i(x)]^*,\,1\leq i\leq m.
\end{equation}
Compute a Lie bracket
\begin{equation}\label{1.35}
[Z_i,Z_j](z)=\left(
               \begin{array}{c}
                 [f_i,f_j](x) \\
                 P_{ij}(z) \\
               \end{array}
             \right),\,\,i,j\in\{1,\ldots,m\},\,\,x\in\b,
\end{equation}
where $[f_i,f_j]$ is the Lie bracket using
$f_i,f_j\in\mathcal{C}^\infty(\b,\mb^n)$. Here $P_{ij}(z)$ is
computed as follows
\begin{equation}\label{1.36}
\begin{array}{ll}
    P_{ij}(z)&= [\pl_x(A_j(x)p)]f_i(x)+A_j(x)A_i(x)p-[\pl_x A_i(x)p]f_j(x)\\
    &=-A_i(x)A_j(x)p=\pl_x\big[ <A_j(x)p,f_i(x)>-<A_i(x)p,f_j(x)>\big]\\
    &=\pl_x\big<p,[f_i,f_j](x)\big>
  \end{array}
\end{equation}
where
$P_{ij}(z)\mathop{=}\limits^{\hbox{def}}\big[\pl_z(A_j(x)p)\big]Z_i(z)-\big[\pl_z(A_i(x)p)\big]Z_j(z),\,\,x\in
B(x_0,2\rho)$, is used. It shows (see \eqref{1.24}) that the Lie
bracket $[Z_i,Z_j](z)$ in \eqref{1.35} can be written as a
characteristic vector field associated to
$H_{ij}(z)\mathop{=}\limits^{\hbox{def}}\big<p,[f_i,f_j](x)\big>,\,x\in\b\s\mb^n$.
As a consequence, assuming that $\{f_1,\ldots,f_m\}$ satisfies
\eqref{1.31} and \eqref{1.32}, we get that
$\{Z_1,\ldots,Z_m\}\s\widehat{K}_0$ is a system of generators for
$\widehat{K}_0$. The proof is complete.
\end{proof}
\begin{lemma}\label{l.2}
Assume that PDE \eqref{1.1} is determined by a second order
continuously differentiable $H_0(x,p):\b\rt\mb$, and the Lie algebra
$\widehat{L}_0=\widehat{K}_0$ defined in Remark \ref{r.3} is of the
finite type over $\mb$. Then there exists a parameterized stationary
solution of PDE \eqref{1.1} given by
\begin{equation}\label{1.37}
\widetilde{z}(\l,z_0)=G_1(t_1)\circ\ldots\circ
G_m(t_m)[z_0],\,\,\l=(t_1,\ldots,t_m)\in\mathop{\prod}\limits_{i=1}^{m}[-a_i,a_i]=\Lambda,
\end{equation}
where $\{G_i(\sigma)[y]:\,\,\sigma\in[-a_1,a_i],\,\,y\in
B(z_0,\rho)\}$ is the local flow generated by the vector field
$Z_i\in \mathcal{C}^\infty(\b,\mb^{2n})$ and $\{Z_1,\ldots,Z_m\}\s
\mathcal{C}^\infty (\b;\mb^{2n})$ is a system of generators for
$\widehat{L}_0$.
\end{lemma}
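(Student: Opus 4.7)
The plan is to show that each generator $Z_i$ appearing in Definition \ref{d.2} preserves the level set of $H_0$, so that any iterated composition of their flows starting at $z_0$ lies in $\{H_0 = H_0(z_0)\}$. By construction each $Z_i\in\widehat{K}_0$ is the characteristic field of some $H_i\in\mathcal{C}^\infty(\b;\mb)$ which is a first integral of $Z_0$, so $\{H_0,H_i\}(z)=\big<\pl_z H_i(z),Z_0(z)\big>=0$ on $\b$. In the $(x,p)$-setting of Remark \ref{r.2} the skew-symmetry \eqref{1.15} is unconditional, hence $\{H_i,H_0\}=-\{H_0,H_i\}=0$ as well, i.e.\ $\big<\pl_z H_0(z),Z_i(z)\big>=0$ on $\b$. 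Equivalently, $H_0$ is a first integral of each $Z_i$, $i=1,\ldots,m$.

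First I would invoke standard ODE theory for the smooth fields $Z_i\in\mathcal{C}^\infty(\b;\mb^{2n})$ to obtain, for each $i$, a number $a_i>0$ and a neighborhood $B(z_0,\rho_i)\s\b$ on which the local flow $G_i(t_i)[y]$ is defined, smooth in $(t_i,y)$, and confined to a slightly larger ball inside $\b$ for $|t_i|\leq a_i$. Choosing the $a_i$ recursively from $i=m$ down to $i=1$ so that the image of each partial composition $G_k(t_k)\circ\cdots\circ G_m(t_m)[z_0]$ remains inside the domain of $G_{k-1}$, one obtains a well-defined smooth map
\begin{equation*}
\widetilde{z}(\l,z_0)=G_1(t_1)\circ\cdots\circ G_m(t_m)[z_0],\qquad \l=(t_1,\ldots,t_m)\in\prod_{i=1}^{m}[-a_i,a_i],
\end{equation*}
taking values in $\b$ and satisfying $\widetilde{z}(0,z_0)=z_0$.

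Next, differentiating $H_0\circ G_i(\cdot)[y]$ in its time argument gives $\frac{d}{dt_i}H_0(G_i(t_i)[y])=\big<\pl_z H_0,Z_i\big>(G_i(t_i)[y])=0$ by the first paragraph, so $H_0$ is constant along each individual flow $G_i$. Applying this invariance step by step along the composition yields
\begin{equation*}
H_0(\widetilde{z}(\l,z_0))=H_0(G_2(t_2)\circ\cdots\circ G_m(t_m)[z_0])=\cdots=H_0(G_m(t_m)[z_0])=H_0(z_0),
\end{equation*}
which is precisely the stationarity condition \eqref{1.9}. The conclusion of the lemma follows.

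The main (and essentially only nontrivial) obstacle is a bookkeeping one: selecting the parameter intervals $[-a_i,a_i]$ and the intermediate neighborhoods so that the $m$-fold composition of local flows is simultaneously defined on the full box $\prod_{i=1}^{m}[-a_i,a_i]$; this is a routine compactness-and-continuity argument. The finite-type hypothesis on $\widehat{L}_0=\widehat{K}_0$ is not used to verify the stationarity itself (which follows purely from the first-integral relation above), but it is what supplies the finite generating family $\{Z_1,\ldots,Z_m\}$, and hence produces a solution depending on a finite-dimensional parameter $\l\in\mb^m$.
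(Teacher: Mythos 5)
Your argument is correct, and it establishes the stationarity condition \eqref{1.9} by a genuinely different and more elementary route than the paper. The paper's proof runs through the gradient-system machinery of \cite{03}: it represents the orbit \eqref{1.37} as a gradient system in the finite-dimensional Lie algebra $L(Z_1,\ldots,Z_m)$, produces linearly independent vectors $q_1(\l),\ldots,q_m(\l)$ with $\pl_\l\widehat{z}(\l,z_0)q_i(\l)=Z_i\big(\widehat{z}(\l,z_0)\big)$ (see \eqref{1.39}--\eqref{1.40}), and then concludes $\pl_\l H_0\big(\widehat{z}(\l,z_0)\big)=0$ from $\big<\pl_z H_0,Z_i\big>=0$ together with the linear independence of the $q_i$. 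You instead observe that the single identity $\big<\pl_z H_0,Z_i\big>=0$ (which you justify more carefully than the paper does, via the first-integral property and the skew-symmetry \eqref{1.15}) already makes $H_0$ invariant under each individual flow $G_i$, and then peel off the composition from the inside out; no representation of $\pl_{t_i}\widehat{z}$ in terms of the $Z_j(\widehat{z})$ is needed, and in particular the finite-type hypothesis plays no role in the stationarity itself, exactly as you say. What the paper's heavier approach buys is the extra structural information in \eqref{1.39}--\eqref{1.40}: the tangent map $\pl_\l\widehat{z}(\l,z_0)$ carries a nonsingular frame onto $\{Z_1(\widehat{z}),\ldots,Z_m(\widehat{z})\}$, so the orbit is an integral manifold of the distribution spanned by the generators; this is what connects the construction to the nonsingularity conditions \eqref{1.7} and to the standard stationary solution when $m=n$, and it is the step where the finite-type hypothesis is actually consumed. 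Since the lemma as stated only asks for a $\mathcal{C}^1$ map satisfying \eqref{1.9} with $\widehat{z}(0,z_0)=z_0$, your proof is complete for the stated claim; just be aware that it proves slightly less than the paper's argument does about the geometry of the resulting parameterized family.
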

\begin{proof}
Assuming that $\widehat{L}_0$ is of the finite type over $\mb$, we
notice that fixing a system of generators
$\{Z_1(z),\ldots,Z_m(z):z\in\b\s\mb^{2n}\}\s\widehat{K}_0$ for
$\widehat{L}_0$ we may and do construct a finite dimensional Lie
algebra $L(Z_1,\ldots,Z_m)\s\widehat{L}_0$ for which
$\{Z_1,\ldots,Z_m\}\s \widehat{K}_0$ is a system of generators over
$\mb$. It allows (see \cite{03}) to define a corresponding gradient
system in the finite dimensional Lie algebra $L(Z_1,\ldots,Z_m)$
associated with the following composition of local flows
\begin{equation}\label{1.38}
\widehat{z}(\l,z_0)=G_1(t_1)\circ\ldots\circ
G_m(t_m)[z_0],\,\,\l=(t_1,\ldots,t_m)\in\mathop{\prod}\limits_{i=1}^{m}[-a_i,a_i]=\Lambda.
\end{equation}
Here $\{G_i(\sigma)[y]:\,\,\sigma\in[-a_1,a_i],\,\,y\in
B(z_0,\rho)\}$ is the local flow generated by the vector field
$Z_i\in \widehat{K}_0$ and $\{Z_1,\ldots,Z_m\}$ is the system of
generators. In addition, there exists analytic vector fields
$\{q_1(\l),\ldots,q_m(\l):\,\l\in
B(0,a)\s\Lambda\}\s\mathcal{C}^{\omega}(B(0,a);\mb^m)$ such that
each vector field $Z_i\big(\widehat{z}(\l,z_0)\big),\,\l\in B(0,a)$,
can be recovered by taking the Lie derivative
\begin{equation}\label{1.39}
\pl_\l\widehat{Z}(\l;z_0)q_i(\l)=Z_i\big(\widehat{z}(\l.z_0)\big),\,\,\l\in
B(0,a)\s\Lambda,\,\,\,i\in\{1,\ldots,m\}
\end{equation}
and
\begin{equation}\label{1.40}
\{q_1(\l),\ldots,q_m(\l)\}\s\mb^m\,\,\hbox{are linearly independent
for any}\,\,\l\in B(0,a)\s\Lambda.
\end{equation}
The manifold defined in \eqref{1.38} stands for the parameterized
stationary solution of PDE \eqref{1.1} and by a direct computation,
we get
\begin{equation}\label{1.41}
\big<\pl_\l H_0\big(\widehat{z}(\l,z_0)\big),q_i(\l)\big>=\big<\pl_z
H_0\big(\widehat{z}(\l,z_0)\big),Z_i\big(\widehat{z}(\l,z_0)\big)\big>=0
\end{equation}
for any $\l\in B(0,a)\s\Lambda$ and $i\in\{1,\ldots,m\}$. Using
\eqref{1.40}, we notice that \eqref{1.41} lead us to
\begin{equation}\label{1.42}
\pl_\l H_0\big(\widehat{z}(\l,z_0)\big)=0,\,\,\forall\,\l\in
B(0,a)\s\Lambda
\end{equation}
and the proof is complete.
\end{proof}
\section{Main results}
With the same notations as in section 2 and considering that Lie
algebra $\widehat{L}_0=\widehat{K}_0$, defined in Remark \ref{r.3},
is of the finite type over $\mathcal{C}^\infty (\b,\mb^{2n})$ (see
definition \ref{d.2}), we get
\begin{theorem}\label{th.1}
Assume that PDE \eqref{1.1} is determined by
$H_0\in\mathcal{C}^2(\b\s\mb^{2n})$ and the Lie algebra
$\widehat{L}_0=\widehat{K}_0$ is of the finite type over
$\mathcal{C}^\infty(\b\s\mb^{2n})$. Then there exists a
parameterized stationary solution of PDE \eqref{1.1} given by
\begin{equation}\label{1.43}
\widehat{z}(\l,z_0)=G_1(t_1)\circ\ldots\circ
G_m(t_m)[z_0],\,\,\l=(t_1,\ldots,t_m)\in
B(0,a)\s\mathop{\prod}\limits_{i=1}^{m}[-a_i,a_i]=\Lambda.
\end{equation}
where $\{G_i(\sigma)[y]:\,\,\sigma\in[-a_1,a_i],\,\,y\in
B(z_0,\rho)\}$ is the local flow generated by the vector field
$Z_i\in \mathcal{C}^\infty(\b;\mb^{2n})$ and
$\{Z_1,\ldots,Z_m\}\s\mathcal{C}^\infty(\b;\mb^{2n})$ is the system
of generators for $\widehat{L}_0$.
\end{theorem}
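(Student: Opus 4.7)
The plan is to follow closely the strategy used to establish Lemma \ref{l.2}, modifying it only where the weakening of the hypothesis (structure constants in $\mathcal{C}^\infty(\b)$ rather than in $\mb$) forces us to leave the strictly finite-dimensional framework. First, I would fix a system of generators $\{Z_1,\ldots,Z_m\}\s\widehat{K}_0$ for $\widehat{L}_0$ as in Definition \ref{d.2}, so that
\[
[Z_i,Z_j](z)=\sum_{k=1}^{m}\a_{ij}^{k}(z)\,Z_k(z),\qquad z\in\b,
\]
with $\a_{ij}^{k}\in\mathcal{C}^\infty(\b)$. I would then form the parameterized mapping $\widehat{z}(\l,z_0)=G_1(t_1)\circ\cdots\circ G_m(t_m)[z_0]$ exactly as in \eqref{1.43}, and shrink $a>0$ enough to keep the composition inside $\b$, so that all $G_i$ are well-defined smooth local flows.

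Second, and this is the central analytic step, I would invoke the gradient-system construction from \cite{03}, adapted from constant to $\mathcal{C}^\infty$-valued structure constants. The goal is to produce analytic vector-valued functions $\{q_1(\l),\ldots,q_m(\l)\}\s\mathcal{C}^\omega(B(0,a);\mb^m)$ which are linearly independent on $B(0,a)$ and satisfy the Lie-derivative identity
\[
\pl_\l\widehat{z}(\l,z_0)\,q_i(\l)=Z_i\big(\widehat{z}(\l,z_0)\big),\qquad\l\in B(0,a),\ i=1,\ldots,m,
\]
in analogy with \eqref{1.39}--\eqref{1.40}. The derivation of $q_i$ is the usual one: differentiating the composition of flows column by column with respect to each $t_j$ produces an expression involving nested adjoint actions $(\mathrm{Ad}\,G_j)^{*}Z_i$, and the closure hypothesis lets one re-express all these iterated brackets as $\mathcal{C}^\infty(\b)$-linear combinations of $Z_1,\ldots,Z_m$ evaluated at $\widehat{z}(\l,z_0)$, hence as $m$-dimensional column vectors multiplied by the common matrix $[Z_1(\widehat{z}),\ldots,Z_m(\widehat{z})]$.

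Third, once the representation above is in hand, the stationarity argument is identical to the one at the end of Lemma \ref{l.2}. Since each $Z_i\in\widehat{K}_0$ is the characteristic field of a first integral $H_i$ of $Z_0$, and $H_0$ is itself a first integral of $Z_0$, the skew-symmetry \eqref{1.15} of the Poisson bracket gives $\{H_0,H_i\}=-\{H_i,H_0\}=0$, which is precisely $\langle\pl_z H_0(z),Z_i(z)\rangle=0$ on $\b$. Composing with $\widehat{z}(\l,z_0)$ and pairing with $q_i(\l)$ yields
\[
\langle\pl_\l H_0\big(\widehat{z}(\l,z_0)\big),q_i(\l)\rangle=\langle\pl_z H_0\big(\widehat{z}(\l,z_0)\big),Z_i\big(\widehat{z}(\l,z_0)\big)\rangle=0,
\]
and the linear independence of $\{q_1(\l),\ldots,q_m(\l)\}$ forces $\pl_\l H_0(\widehat{z}(\l,z_0))\equiv 0$. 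Together with $\widehat{z}(0,z_0)=z_0$, this gives $H_0(\widehat{z}(\l,z_0))=H_0(z_0)$ on $B(0,a)$, which is the required stationarity.

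The main obstacle I anticipate lies in the second step: ensuring that the gradient-system reconstruction of the $Z_i(\widehat{z}(\l,z_0))$ via analytic $q_i(\l)$ goes through when the $\a_{ij}^{k}$ are functions rather than scalars. In the finite-dimensional case of Lemma \ref{l.2} one passes to an abstract finite-dimensional Lie algebra $L(Z_1,\ldots,Z_m)$ and imports the standard construction; here one has to argue that the iterated adjoint expansions still close on the module generated by $Z_1,\ldots,Z_m$ over $\mathcal{C}^\infty(\b)$, and that the resulting linear system determining $q_i(\l)$ has an analytic solution with the required independence. Apart from verifying this, the rest of the proof is formally a copy of the proof of Lemma \ref{l.2}.
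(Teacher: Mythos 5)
Your proposal follows essentially the same route as the paper's own proof: form the orbit of composed flows, invoke the gradient-system/algebraic-representation machinery of \cite{03} for the $(f.g.o;z_0)$ Lie algebra $L(Z_1,\ldots,Z_m)$ to obtain linearly independent $q_i(\l)$ with $\pl_\l\widehat{z}(\l,z_0)q_i(\l)=Z_i\big(\widehat{z}(\l,z_0)\big)$, and then use $\big<\pl_z H_0,Z_i\big>=0$ to conclude $\pl_\l H_0\big(\widehat{z}(\l,z_0)\big)=0$. Your version is if anything slightly more careful than the paper's, since you make explicit (via the skew-symmetry \eqref{1.15}) why each $Z_i\in\widehat{K}_0$ annihilates $H_0$, a point the paper leaves implicit.
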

\begin{proof}
By hypothesis, let
$\{Z_1,\ldots,Z_m\}\s\mathcal{C}^\infty(\b;\mb^{2n})$ be a system of
generators for $\widehat{L}_0$ which is assumes of the finite type
over $\mathcal{C}^\infty(\b;\mb^{2n})$. Define an orbit of
$\widehat{L}_0$ starting from $z_0$.
\begin{equation}\label{1.44}
\widetilde{z}(\l,z_0)=G_1(t_1)\circ\ldots\circ
G_m(t_m)[z_0],\,\,\l=(t_1,\ldots,t_m)\in\mathop{\prod}\limits_{i=1}^{m}[-a_i,a_i]=\Lambda.
\end{equation}
where $\{G_i(\sigma)[y]:\,\,\sigma\in[-a_1,a_i],\,\,y\in
B(z_0,\rho)\}$ is the local flow generated by the vector field
$Z_i\in\{Z_1,\ldots,Z_m\},\,\,i\in\{1,\ldots,m\}$. Let
$L(Z_1,\ldots,Z_m)\s\mathcal{C}^\infty(\b;\mb^{2n})$ be the Lie
algebra generated by $\{Z_1,\ldots,Z_m\}$ and notice that
$L=L(Z_1,\ldots,Z_m)$ is finitely generated over orbits starting
from $z_0$, which is abbreviated as $(f.g.o;z_0)$ in \cite{03}. On
the other hand, using the orbit starting from $z_0$ defined in
\eqref{1.44}, we associate a gradient system in $L(Z_1,\ldots,Z_m)$.
\begin{equation}\label{1.45}
\begin{array}{ll}
    \pl_1\widehat{z}(\l,z_0)&= Z_1\big(\widehat{z}(\l,z_0)\big),\,\pl_2 \widehat{z}(\l,z_0)=X_2\big(t_1;\widehat{z}(\l,z_0)\big),\ldots,\\
    &=\pl_1\widehat{z}(\l,z_0)=X_m\big(t_1,\ldots,t_{m-1};\widehat{z}(\l,z_0)\big)
    ,\,\l=(t_1,\ldots,t_m)\in\\\mathop{\prod}\limits_{i=1}^{m}[-a_i,a_i]=\Lambda
  \end{array}
\end{equation}
where
$\pl_i\widehat{z}(\l,z_0)\mathop{=}\limits^{\hbox{def}}\pl_{t_i}\widehat{z}(\l,z_0),\,\,i\in\{1,\ldots,m\}$.

Using the algebraic representation of a gradient system determined
by a system of generators $\{Z_1,\ldots,Z_m\}\subset L$ in a
$(f.g.o;z_0)$ Lie algebra $L$ (see \cite{03}), we get
\begin{equation}\label{1.46}
\pl_\l\widehat{z}(\l,z_0)=\{Z_1,\ldots,Z_m\}(\widehat{z}(\l,z_0))A(\l),\,\l\in\Lambda,\,\,A(0)=I_m,
\end{equation}
where the $(m\times m)$ matrix $A(\l)$ is nonsingular for any $\l\in
B(0,a)\s\Lambda$ with some $a>0$. It lead us to get
$\big\{\{q_1(\l),\ldots,q_m(\l)\}\s \mb^m :\,\l\in B(0,a)\big\}$
such that
\begin{equation}\label{1.47}
\{q_1,\ldots,q_m\}\s\mathcal{C}^\infty(B(0,a);\mb^m)\,\,\,\hbox{are
linearly independent}\,\,\forall\,\l\in B(0,a),
\end{equation}

\begin{equation}\label{1.48}
\pl_\l\widehat{z}(\l,z_0)q_i(\l)=Z_i\big(B(0,a);\mb^m\big),\,\,\l\in
B(0,a)\s \Lambda,\,i\in\{1,\ldots,m\}.
\end{equation}
The equation \eqref{1.48} allows us to get the conclusion
\begin{equation}\label{1.49}
\pl_\l\widehat{z}(\l,z_0)q_i(\l)=0,\,\,\l\in B(0,a)\s
\Lambda,\,i\in\{1,\ldots,m\}.
\end{equation}
and using \eqref{1.47}, we obtain $\pl_\l
H_0(\widehat{z}(\l,z_0))=0,\,\l\in B(0,a)\s\Lambda$ and the proof is
complete.
\end{proof}

With the same notations as in section 2 and considering that the Lie
algebra $L_0$ is of the finite type over
$\mathcal{C}^\infty(\b,\mbn)$ with respect to $K_0\s L_0$ (see
definition \ref{def.1}), we get
\begin{theorem}\label{th.2}
Assume that PDE \eqref{1.1} is determined by
$H_0\in\mathcal{C}^\infty(\b\s\mbn)$ and $L_0\supset K_0$ is of the
finite type over $\mathcal{C}^\infty(\b\s\mbn)$ with respect to
$K_0$. Let
$\{Z_1,\ldots,Z_m\}\s\mathcal{C}^\infty(\b\s\mbn,\mbn)\cap K_0$ be a
system of generators for $L_0$. Then there exists a parameterized
stationary solution for PDE \eqref{1.1} given by
\begin{equation}\label{1.50}
\widehat{z}(\l,z_0)=G_1(t_1)\circ\ldots\circ
G_m(t_m)[z_0],\,\,\l=(t_1,\ldots,t_m)\in
B(0,a)\s\mathop{\prod}\limits_{i=1}^{m}[-a_i,a_i]=\Lambda.
\end{equation}
where $\{G_i(\sigma)[y]:\,\,\sigma\in[-a_1,a_i],\,\,y\in
B(z_0,\rho)\}$ is the local flow generated by the vector field
$Z_i,\,1\leq i\leq m$.
\end{theorem}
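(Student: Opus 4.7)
The plan is to mirror the proof of Theorem \ref{th.1} stage by stage. The only genuinely new feature of the general case is that $L_0$ is strictly larger than $K_0$ (Remark \ref{r.1}), so elements of $L_0$ need not be characteristic fields at all; however, the generators $Z_1,\dots,Z_m$ are by hypothesis in $K_0$, which is exactly what is needed to ensure that each local flow $G_i$ preserves $H_0$. First I would form the Lie subalgebra $L(Z_1,\dots,Z_m)\s L_0$ generated by the given system. The finite-type assumption with respect to $K_0$ provides $\mathcal{C}^\infty(\b)$-coefficients $\a_{ij}^k$ such that $[Z_i,Z_j]=\sum_k \a_{ij}^k Z_k$, so $L(Z_1,\dots,Z_m)$ is $(f.g.o;z_0)$ in the sense of \cite{03}. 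The algebraic representation theorem of \cite{03}, recalled in \eqref{1.46}, then applies to the composition of flows $\widehat{z}(\l,z_0)=G_1(t_1)\circ\cdots\circ G_m(t_m)[z_0]$, yielding an $m\times m$ matrix $A(\l)$ with $A(0)=I_m$ and $\pl_\l\widehat{z}(\l,z_0)=\{Z_1,\dots,Z_m\}(\widehat{z}(\l,z_0))\,A(\l)$. By continuity $A(\l)$ stays invertible on some $B(0,a)\s\Lambda$, and taking its columns as $q_1(\l),\dots,q_m(\l)$ produces smooth, linearly independent $\mb^m$-valued functions on $B(0,a)$ with $\pl_\l\widehat{z}(\l,z_0)\,q_i(\l)=Z_i(\widehat{z}(\l,z_0))$.

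The substantive step is the conservation of $H_0$ along the orbit, and this is where the hypothesis $Z_i\in K_0$ becomes decisive. By \eqref{1.27} there exist $H_i\in\mathcal{C}^\infty(\b)$, each a first integral of $Z_0$, with $Z_i(z)=T(p)\pl_z H_i(z)$; equivalently, $\{H_0,H_i\}\equiv 0$. Applying the skew symmetry \eqref{1.15} of the Poisson bracket gives
\[
\big\langle\pl_z H_0(z),Z_i(z)\big\rangle=\{H_i,H_0\}(z)=-\{H_0,H_i\}(z)=0,\qquad z\in\b.
\]
Combined with the identity from the previous paragraph this yields $\langle\pl_\l H_0(\widehat{z}(\l,z_0)),q_i(\l)\rangle=0$ for every $i$; the linear independence of the $q_i$ in $\mb^m$ then forces $\pl_\l H_0(\widehat{z}(\l,z_0))\equiv 0$ on $B(0,a)$, and integration with the initial condition $\widehat{z}(0,z_0)=z_0$ delivers the required stationarity \eqref{1.9}.

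The main obstacle I expect is not computational but a matter of book-keeping: elements of $L_0\setminus K_0$ arise inescapably when one forms iterated Lie brackets (Remark \ref{r.1}), yet the argument for conservation of $H_0$ is only available for vector fields in $K_0$. The finite-type hypothesis with respect to $K_0$ is therefore what makes the theorem go through, by guaranteeing that the gradient-system representation of \cite{03} can be implemented entirely in terms of generators drawn from $K_0$, so that one never has to invoke $H_0$-preservation for a vector field that might fail to be a characteristic field.
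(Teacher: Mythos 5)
Your proposal is correct and follows essentially the same route as the paper: reduce to the $(f.g.o;z_0)$ Lie algebra $L(Z_1,\ldots,Z_m)$, invoke the algebraic representation \eqref{1.46} of the gradient system from \cite{03} to obtain the nonsingular $A(\l)$ and the linearly independent $q_i(\l)$, and kill $\pl_\l H_0(\widehat z(\l,z_0))$ via $\langle\pl_z H_0,Z_i\rangle=\{H_i,H_0\}=-\{H_0,H_i\}=0$. The paper simply cites the steps \eqref{1.45}--\eqref{1.49} of Theorem \ref{th.1} at this point; you have written out the same argument, making explicit the role of the hypothesis $Z_i\in K_0$ that the paper leaves implicit.
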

\begin{proof}
By hypothesis, any Lie bracket $[Z_i,Z_j](z)$ is a linear
combination of $\{Z_1,\ldots,Z_m\}(z)$, $z\in B(z_0,2\rho)$, using
smooth functions from $\mathcal{C}^\infty(\b\s\mbn)$. It shows that
the Lie algebra
$L=L(Z_1,\ldots,L_m)\s\mathcal{C}^\infty(\b\s\mbn;\mbn)$ generated
by the fixed $\{Z_1,\ldots,Z_m\}$ is finitely generated over orbits
starting from $z_0$ (see $(f.g.o;z_0)$ Lie algebra in \cite{03}). In
addition, $\{Z_1,\ldots,Z_m\}$ is a system of generators for $L$.
Using the orbit defined in \eqref{1.50} we proceed as in proof of
Theorem \ref{th.1} (see \eqref{1.45}-\eqref{1.49}) and get the
conclusion. The proof is complete.
\end{proof}
\textbf{Acknowledgement}\\
We are much obliged to Professor Dr.\ C. V\^{a}rsan  for this
research article.

\vskip 1cm

\hspace{3.1in}\textit{Department of Mathematics},

\hspace{2.8in}\textit{Government College University,}

\hspace{3.3in} \textit{Faisalabad, Pakistan.}

\hspace{3.2in} \email {(\textit{saimashaa@gmail.com})}\\

\hspace{3.8in} and\\

\hspace{3.1in}\textit{Department of Mathematics},

\hspace{1.7in} \textit{COMSATS Institute of Information Technology,}

\hspace{3.3in}\textit{Islamabad, Pakistan.}

 \hspace{3.1in}\email{\textit{(mrsaeedakram@gmail.com)}}

\end{document}